\title{Infinite Reduction of Divisors on Metric Graphs}
\author{Spencer Backman}
\newtheorem{theorem}{Theorem}
\newtheorem{lemma}{Lemma}
\begin{document}

\begin{abstract}
We demonstrate that the greedy algorithm for reduction of divisors on metric graphs need not terminate by modeling the Euclidean algorithm in this context.  We observe that any infinite reduction has a well defined limit allowing us to treat the greedy reduction algorithm as a transfinite algorithm and to analyze its running time via ordinal numbers.   We provide lower and upper bounds which establish a worst case running time of $\omega^{\Theta({\rm deg}(D))}$.

\end{abstract}


\maketitle

\section{Introduction}
Chip-firing on graphs has been studied in various contexts for over 20 years.  The theory has found new applications in the recent work of Baker and Norine \cite{Baker} who utilized chip-firing to prove a Riemann-Roch theorem for graphs analogous to the classical statement for curves.  Their work demonstrates that chip-firing gives an extremely useful combinatorial language for investigating linear equivalence of divisors on graphs, a topic which was previously investigated from a more algebraic and geometric perspective \cite{Bacher, Lorenzini1, Lorenzini2}.   Gathmann and Kerber \cite{Gathmann}, and independently Mikhalkin and Zharkov \cite{Mikhalkin}, proved a Riemann-Roch theorem for tropical curves.  The approach of Gathmann and Kerber was to establish the tropical Riemann-Roch theorem as a limit of Baker and Norine's result for graphs under subdivision of edges.  Hladky, Kral, and Norine \cite{Norin} then showed that this theorem may be proven in an elementary way by studying the combinatorics of chip-firing on abstract tropical curves, i.e., metric graphs.  Several papers have pursued this approach further along with other consequences for the theory of linear equivalence of divisors on tropical curves  \cite{Amini} \cite{Cools} \cite{Musiker} \cite{Luo}.

The central combinatorial objects in this study, for both graphs and tropical curves, are the so-called $q$-reduced divisors (known elsewhere in the literature as superstable configurations or $G$-parking functions).  A $q$-reduced divisor is a special representative from the class of divisors linearly equivalent to a given divisor.  There is an algorithmic method for obtaining the unique $q$-reduced divisor consisting of two parts.  In this paper, we investigate the second, more subtle part of this process known as reduction.  We offer a short proof that Luo's metric variant of Dhar's reduction algorithm terminates after a finite number of iterations.  We then investigate the greedy reduction algorithm, which in the graphical case is known to succeed.  We show that the Euclidean algorithm may be modeled by the greedy reduction of divisors on metric graphs.  By taking a pair of incommensurable numbers as input, we obtain a run of the greedy reduction algorithm which does not terminate.  

After observing that any infinite reduction has a well-defined limit,  we analyze the running time of the greedy algorithm via ordinal numbers.  We provide nearly matching upper and lower bounds which establish that the worst case running time of $\omega^{\Theta({\rm deg}(D))}$.  The lower bound is obtained by gluing $n$ copies of the Euclidean algorithm example together and ordering the firings lexicographically.  The upper bound of $\omega^{{\rm deg}(D)}$ is provided by an inductive argument.

\section{Metric Chip-Firing and Reduced Divisors}

       A metric graph $\Gamma$ is a metric space which can be obtained from an edge weighted graph $G$ by viewing each edge with weight $w_{i,j}$ as being isomteric to an inteval of length $w_{i,j}$.  Each point interior to an edge has a neighborhood homeomorphic to an open interval and each vertex has a small neighborhood homeomorphic to a star.  The {\it degree} of a point $p \in \Gamma$ is the number of tangent directions at $p$.  A vertex is called a {\it combinatorial vertex} if it has degree other than 2.
       
      This paper concerns certain combinatorial aspects of chip-firing on metric graphs, so we will take a rather concrete working definition of chip-firing.  For completeness sake, we begin with a slightly more abstract definition.  Fix a metric graph $\Gamma$ and a parameterization of the edges of $\Gamma$.  Let $f$ be a piecewise affine function with integer slopes on $\Gamma$.  We define the Laplacian operator $Q$ applied to $f$ at a point to be the sum of the slopes of the function as we approach $p$ along each of the tangent directions at $p$.  We note that $Q(f)(p)=0$ if $f$ is differentiable at $p$.  We define a {\it divisor} $D$ on $\Gamma$ to be a formal sum of points from $\Gamma$ with integer coefficients, all but a finite number of which are zero.  We say that $D$ has $D(p)$ {\it chips} at $p$.  Given some divisor $D$ on $\Gamma$, we define the chip-firing operation $f$ applied to $D$ to be $D-Q(f)$.  We say that two divisors are linearly equivalent if they differ by some chip-firing move.   A divisor E is said to be {\it effective} if it has a nonnegative number of chips at each point.

      We now give the definition of chip-firing on metric graphs which will be used for the remainder of the paper.   Let $X$ and $Y$ be two disjoint open connected subsets of $\Gamma$ such that the $\Gamma \setminus (X\cup Y) =  Z$ is isometric to a disjoint collection of closed intervals of length $\epsilon$.  Note that the set $Z$ defines a minimal cut in $\Gamma$.  Now, we define the divisor $Q(f)$ as the divisor which is negative one at the end points of these intervals on the boundary of $X$ and positive one at the endpoints on the boundary of $Y$.  One may intuitively understand this divisor as pushing a chip along each edge in this cut a fixed distance $\epsilon$.  We take this to be the basic type of chip-firing move and call $\epsilon$ the {\it length} of the firing.  Note that the chip-firing divisor is of the form $Q(f)$ where $f$ is the piecewise affine function with integer slopes which is 0 on $X$, $\epsilon$ on $Y$, and has slope 1 on the each  open interval in $Z$.  We write $\epsilon (f)$ for the length of the firing $f$.  As is noted in \cite{Baker3}, any piecewise affine function with integer slopes can be expressed as a finite sum of the functions just described, so we will not sacrifice any generality by restricting our definition of chip-firing to be basic chip-firing moves.      
      
      A {\it $q$-reduced divisor} is a divisor which is nonnegative at each point other than $q \in \Gamma$, such that any firing $Q(f)$ which pushes chips toward $q$ causes some point to go into debt.  It is proven in \cite{Norin} that given any divisor $D$ on a metric graph $\Gamma$, there exists a unique $q$-reduced divisor $\nu$ which can be reached from $D$ by a sequence of chip-firing moves.   Moreover, there exists an effective divisor $E$ equivalent to $D$ if and only if $\nu$ is effective.  An algorithmic way of obtaining such a divisor was described by Luo \cite{Luo}.  His method is to first bring every point other than $q$ out of debt by some sequence of chip-firing moves.  Once we have obtained such a configuration, we may perform firings which push chips back toward $q$ without causing any vertex to go into debt.  We call this second part of the process {\it reduction}.  Luo's method for reducing a divisor is to use a generalization of Dhar's burning algorithm originally investigated in the study of the sandpile model.
            
      Dhar's burning algorithm may be described in the following informal way:  Let $D$ be a divisor which is nonnegative at every point of $\Gamma$ other than $q$.  Place $D(p)$ firefighters at each point $p$ other that $q$.  Light a fire at $q$ and let the fire spread through $\Gamma$ along the edges.  Every time the fire reaches a firefighter, it stops.   If the fire approaches a point from more directions than there are firefighters present, these firefighters are overpowered and the fire continues to spread through the $\Gamma$.  It is not hard to check that a divisor is $q$-reduced if and only if the fire burns through the entirety of $\Gamma$.
      
          Let $D$ be nonneqative at all points other than $q$.  We say that a firing $f$ is {\it legal } if $D-Q(f)$ is also nonnegative at all points other than $q$.  A firing $f$ is a {\it maximal legal firing} for $D$ if the legal firing $f$ is taken to have maximum associated length, i.e., the firing describes a push of chips along a cut so that at least one of the chips hits a combinatorial vertex and therefore cannot be pushed any further without choosing a different cut.  Every time a fire is prevented from burning through $\Gamma$, the collection of chips, i.e., firefighters which stop the fire define a maximal legal firing  towards $q$ which does not cause any point to go into debt.  Luo showed that if we take a divisor which is non-negative away from $q$ and reduce according to the maximal legal firings obtained from this algorithm, we will obtain a reduced divisor after a finite number of steps.  We offer the following new short proof that this process terminates.
      
\begin{theorem}[Luo \cite{Luo}, Theorem 2.14]
 Let $\Gamma$ be a metric graph and $D$ be a divisor on $\Gamma$ such that $D(p)\geq0$ for all $p \in \Gamma$ with $p\neq q$.  The reduction of $D$ with respect to $q$ by maximal legal firings obtained from Dhar's algorithm terminates after a finite number of iterations.
\end{theorem}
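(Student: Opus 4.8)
The plan is to adapt the standard termination argument for Dhar's algorithm on \emph{finite} graphs: there one exhibits an integer quantity that is monotone along the algorithm and bounded, hence eventually constant, and then iterates this observation over the (finitely many) vertices. Here the role of ``vertex'' is played by the combinatorial vertices of $\Gamma$, of which there are finitely many; write them as $q = u_{0}, u_{1}, \dots, u_{n}$. The first concrete step is to show that $D(q)$ is non-decreasing along the algorithm and bounded above, hence eventually constant. Boundedness is immediate: chip-firing preserves $\deg D$ and every point other than $q$ carries a non-negative number of chips, so $D(q) \le \deg D$ at every stage. Monotonicity is where Dhar's burning rule enters. A maximal legal firing obtained from the algorithm pushes the defending chips along the cut edges of the minimal cut that has the burnt region $U$ on one side, and it pushes them \emph{towards} $q$, i.e.\ into $U$; in the notation of the excerpt, chips are added at the $A$-closer endpoints of the fired subintervals and removed at the $B$-closer ones. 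Since the fire originates at $q$, a neighbourhood of $q$ is always burnt, so $q$ lies in the interior of $U$ and is never a defending-chip location nor the far endpoint of a cut edge; hence no chip is ever pushed off $q$, and $D(q)$ can only increase. A non-decreasing, bounded, integer sequence stabilizes after finitely many iterations.

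I would then argue inductively that each time such a quantity stabilizes, a strictly larger fixed closed subregion of $\Gamma$ becomes burnt in every subsequent iteration. For the base case: once $D(q)$ is constant, the burnt region must reach a combinatorial vertex other than $q$ at every subsequent iteration --- otherwise the burnt region is a star about $q$ carrying one defending chip on each edge at $q$, the associated minimal cut is the set of all edges at $q$, and the maximal legal firing pushes one of those defending chips all the way onto the combinatorial vertex $q$, strictly increasing $D(q)$, a contradiction. Given a closed neighbourhood $N$ of $q$ that is burnt in every iteration, run the same idea for the total number of chips lying in $N$: this count is again non-decreasing (a maximal legal firing can only push chips \emph{into} $N$) and bounded by $\deg D$, hence eventually constant, after which one promotes the always-burnt region to contain the next combinatorial vertex. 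Iterating, and using that $\Gamma$ has only finitely many combinatorial vertices, after finitely many stabilizations every combinatorial vertex is permanently burnt; a short final step --- subdivide $\Gamma$ at the finitely many remaining chip positions and invoke the finite-graph termination argument on the resulting model --- then shows all of $\Gamma$ eventually burns, which by the burning characterization means the current divisor is $q$-reduced and the algorithm has halted.

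The main obstacle is making the inductive step precise in the metric setting. On a finite graph, ``$N(q)$ burns forever'' is immediate from ``$D(q)$ is constant'', since a neighbour of $q$ can be unburnt only if it fires, which feeds a chip to $q$; on a metric graph the relevant ``next region'' has no canonical combinatorial description, and --- crucially --- one must rule out the Zeno-type behaviour exhibited by the \emph{greedy} algorithm, where the burning fronts advance by ever-smaller amounts and never stabilize. The feature that distinguishes Dhar's algorithm is precisely the \emph{maximality} of its firings: each firing pushes chips as far as possible, onto a combinatorial vertex. I expect the cleanest rigorous route is to show that once the relevant chip-count has stabilized, only finitely many defending configurations can recur near the always-burnt region, so that from that point on the process is governed by a finite graph model and the finite-graph argument applies; equivalently, one wants to bound the set of positions at which chips can come to rest. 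Establishing this --- in particular controlling chips that settle at non-combinatorial points of $\Gamma$ --- is where the real work lies, and it is exactly the phenomenon whose failure makes the greedy algorithm non-terminating.
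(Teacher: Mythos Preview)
Your plan is coherent up through the base case, but the inductive step and the closing reduction both have genuine gaps that you only partly anticipate. Even once the chip-count on your always-burnt region $N$ has stabilized, your argument shows only that in every subsequent iteration the burnt region reaches \emph{some} combinatorial vertex outside $N$; it need not be the same vertex each time, so you have not produced a fixed larger region to which to promote $N$. More seriously, the proposed endgame --- ``subdivide $\Gamma$ at the finitely many remaining chip positions and invoke the finite-graph termination argument'' --- does not work as stated: a maximal legal firing moves every defending chip by the common length $\epsilon$, so all but one of them land at \emph{new} non-combinatorial points, and the subdivision is not stable under even a single step of the algorithm. No fixed finite model captures the process, and controlling this drift is exactly the Zeno issue you flag; your sketch does not resolve it.

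The paper's proof avoids all of this with a short double induction on $k = \#E(\Gamma) - D(q)$ (after normalizing to $\deg D = 0$), working entirely with the set $S$ of edges incident to $q$. The key structural observation is that on any $e \in S$ carrying a chip, the chip on $e$ closest to $q$ can only be pushed towards $q$ and hence never leaves $e$; so either some such chip eventually reaches $q$, which drops $k$, or else --- since if every edge of $S$ carries a chip one of them would be delivered to $q$ at the next step --- some edge of $S$ \emph{never} acquires a chip during the entire run, in which case it may be contracted without affecting the algorithm, again dropping $k$. This edge-contraction trick is the idea your outline is missing: rather than trying to grow an always-burnt region through a limiting argument, it removes an edge outright, converting the analytic difficulty into a strict decrease of an integer invariant and sidestepping any Zeno-type analysis.
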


\begin{proof}
Let $\#E(\Gamma)$ be the number of edges in $\Gamma$ and without loss generality take $D$ to be a divisor which has 0 total chips.  We proceed by double induction on $\#E(\Gamma)$ and $- D(q)$, i.e., induction on $k =\#E(\Gamma)-D(q)$.  We will restrict our attention to the set $S$ of edges adjacent to $q$.  First observe that for any edge $e$ in $S$ which has a chip, the closest chip $p$ in $e$ to $q$ will never leave $e$, although chips in $e$ further away from $q$ may.  This is because when the fire burns from $q$, it will always reach $p$.  If $p$ does eventually reach $q$, then we may induct on $k$.  Also, if every edge in $S$ contains a chip, on the next iteration of the algorithm, the fire will burn from $q$ and be stopped by precisely these edges.  Hence, these chips will be fired towards $q$, which will receive a chip and we may again induct on $k$.  Therefore, we may assume that there exists an edge in $S$ which will never receive a chip.   We can contract this edge and again induct on $k$. 
\end{proof}

\section{Infinite Greedy Reduction}

    We can always reduce by performing any maximal legal firings we wish.  If this process terminates, we know by uniqueness that we have reached the $q$-reduced divisor equivalent to $D$.  In the case of discrete graphs, it is clear that this process will terminate. We begin by providing a negative answer to Matthew Baker and Ye Luo's question of whether the greedy reduction algorithm for metric graphs also terminates after a finite number of iterations \cite{Baker3.5}.  To this effect, we will provide an example which demonstrates that we can model the Euclidean algorithm in this context and therefore, by taking our input to be a pair of incommensurable numbers, obtain a run of the greedy reduction algorithm which does not terminate in finite time.

We define a {\it greedy reduction} of a divisor $D = D_0$ to be a sequence of divisors $D_i$ such that $D_i = D_{i-1} - Q (f_{i-1})$, where $Q(f_{i-1})$ is a maximal legal firing for $D_{i-1}$.

\begin{theorem}
 There exists a metric graph $\Gamma$, a divisor $D$ on $\Gamma$ such that $D(p)\geq0$ for all $p \in \Gamma$ with $p\neq q$, and a greedy reduction of $D$ with respect to $q$ which does not terminate after a finite number of steps.
\end{theorem}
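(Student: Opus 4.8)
The plan is to exhibit a metric graph $\Gamma$ with a point $q$, a divisor $D$ with $D(p)\ge 0$ for all $p\ne q$, and an explicit \emph{infinite} sequence of maximal legal firings applicable to $D$. This suffices: a greedy reduction is by definition a sequence of maximal legal firings, and the mere existence of a maximal legal firing for a divisor certifies that the divisor is not $q$-reduced (a $q$-reduced divisor admits no nontrivial firing toward $q$ that keeps every other point out of debt). So an infinite such sequence is precisely a non-terminating greedy reduction.

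To produce the sequence I would simulate the Euclidean algorithm. Fix two edge lengths whose ratio is irrational, say $\alpha$ and $\beta$, and design $\Gamma$ so that $D$ carries two distinguished chips whose positions, measured as distances $(x,y)$ from a fixed vertex along two designated edges, serve as the ``state,'' the remaining chips of $D$ being present only to make the firings below legal. The heart of the argument is a one-step lemma: from a state $(x,y)$ with $x>y>0$ there is a maximal legal firing — choose the appropriate cut and push until a chip reaches a combinatorial vertex — whose net effect on the state is the transition $(x,y)\mapsto(x-y,y)$, and symmetrically $(x,y)\mapsto(x,y-x)$ when $y>x>0$. Iterating within one branch performs a division step, and alternating between the two branches performs a full round $(x,y)\mapsto(y,\,x\bmod y)$ of the Euclidean algorithm. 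Initializing the state at $(\alpha,\beta)$, the subtractive Euclidean algorithm on an incommensurable pair never produces a coordinate equal to $0$, so the state remains in $\mathbb{R}_{>0}^2$ forever and the sequence of maximal legal firings is infinite.

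I expect the one-step lemma to be the main obstacle, for two reasons. First, once a distinguished chip reaches a vertex at the end of a firing it is ``absorbed'' (or stacked) there, and the gadget must be arranged so that re-injecting it into play — restoring a configuration with two freely moving distinguished chips — is itself a maximal legal firing; this is what prevents $\Gamma$ from being as simple as a single cycle and is where a few extra parallel edges enter. Second, for each of the two branches one must check that the chosen firing is simultaneously \emph{legal} — no point other than $q$ goes into debt, which dictates the exact placement of the auxiliary chips — and \emph{maximal} — the firing length is forced to equal $\min(x,y)$ because the nearer of the two distinguished chips reaches a combinatorial vertex before anything else would go negative. Granting the lemma, the incommensurability of $\alpha$ and $\beta$ completes the proof immediately.
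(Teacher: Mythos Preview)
Your plan is essentially the paper's own approach: simulate the subtractive Euclidean algorithm on an incommensurable pair by encoding the state $(x,y)$ as the distances of two distinguished chips from fixed combinatorial vertices, and realize each subtraction as a maximal legal firing. You have correctly anticipated that auxiliary chips and extra edges are needed to make each firing legal and maximal, and that re-seating a chip after it hits a vertex is the delicate part. (In the paper's gadget this actually takes \emph{two} maximal legal firings per subtraction---a push of three chips forward along a cut, then a push of two of them back along a different cut---rather than the single firing you propose, but that is an implementation detail.)

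There is one genuine gap in your outline. You check legality only step by step, but the auxiliary chips drift a little with every firing, and over an \emph{infinite} sequence of firings their total displacement could be unbounded, eventually carrying some auxiliary chip to an endpoint of its edge and breaking legality (or maximality) from that point on. To rule this out you must show that the sum of all firing lengths converges. In the Euclidean simulation these lengths are exactly the quantities $n_i a_i$ arising in the division steps, and the paper bounds their sum by $4b$ using $l_{i+2}<\tfrac12 l_i$. Once you have this bound you can take every edge longer than twice the total, place the auxiliary chips at the midpoints, and conclude that they never reach an endpoint; without it the construction is not yet a proof.
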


 \centerline {\includegraphics[scale=0.65]{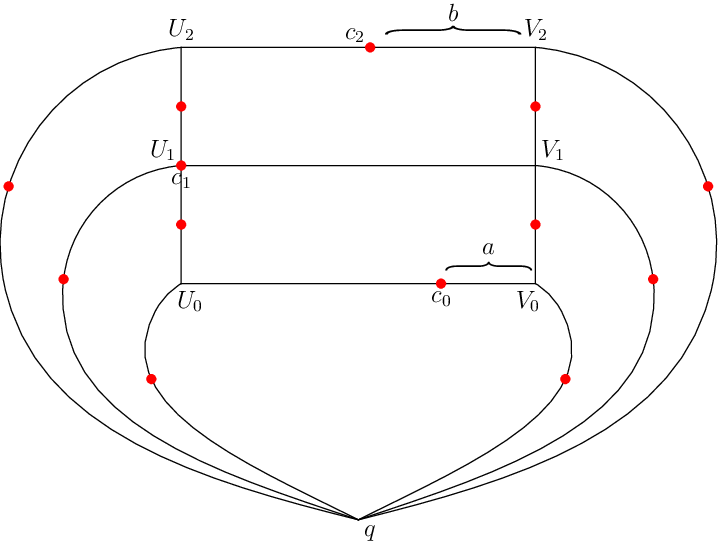}}
\

    \centerline{   \includegraphics[scale=0.65]{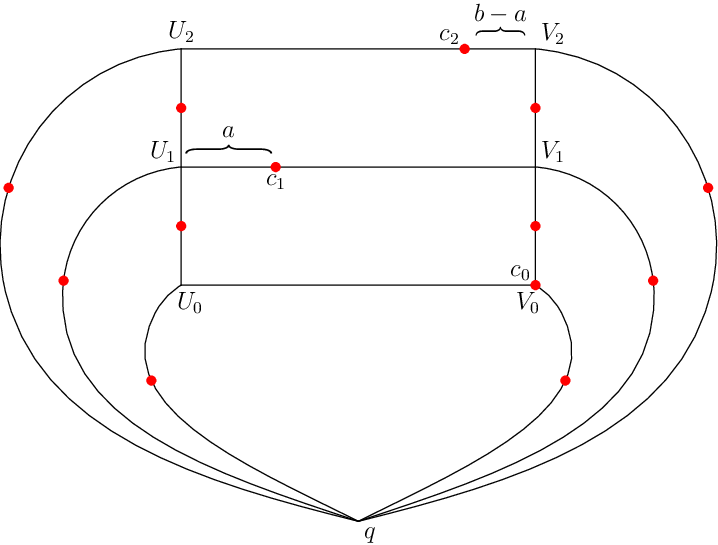}}
       
\ 

    \centerline{   \includegraphics[scale=0.65]{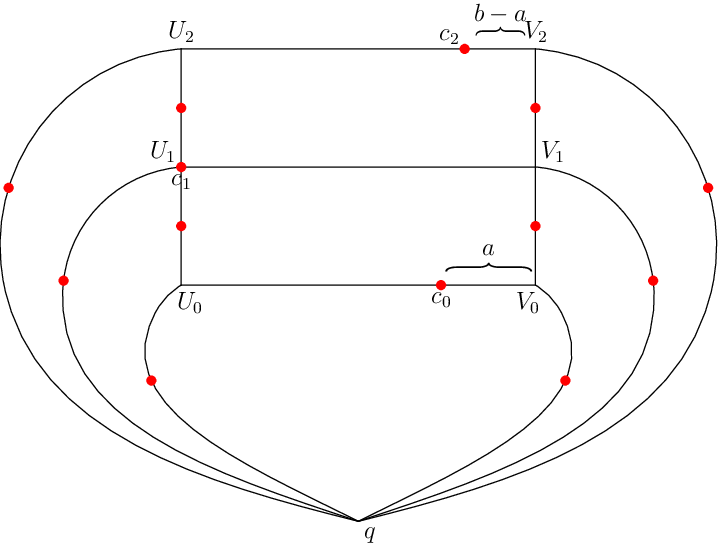}}
       
  \centerline {Figure 1.  A divisor on a metric graph with infinite greedy }
  \centerline {reduction via firings which model the Euclidean algorithm.}

\

\begin{proof}

       We now present an example which demonstrates that the greedy reduction algorithm may not terminate after a finite number of iterations.  We refer to Figure 1 which illustrates a certain divisor $D$ on a metric graph $\Gamma$.  We will take all of the edge lengths to be sufficiently large.  What is meant by sufficiently large will become clear after we have completed the proof.  We take $D$ to have chips (with labels for the clarity's sake) $c_0, c_1,$ and $c_2$ with $c_1$ at $u_1$, $c_0$ at distance $a$ from $v_0$ on $(u_0, v_0)$ and $c_2$ at distance $b$ from $v_2$ on $(u_2, v_2)$ with $a<b$.  We take $D$ to have a chip at the midpoint of every other edge in $\Gamma$.  It is not important that chips be at the midpoints, only that they be sufficiently far from both endpoints.  The idea of the example is to show that given $D$, we can perform the subtraction of $a$ from $b$ without changing the rest of the divisor much.  We may then perform the Euclidean algorithm on inputs $a$ and $b$.  By taking $a$ and $b$ so that ${a\over b} \notin \mathbb{Q}$ it follows (after verifying the convergence of a certain series) that we can obtain a run of the greedy reduction algorithm which does not terminate.  We now describe the pair of firings which allows us to subtract $a$ from $b$.
       
       Firing 1:  We would like to perform a maximal legal chip-firing move towards $q$ which will push chips $c_0$, $c_1$, and $c_2$ length $a$ toward $v_0$, $v_1$, and $v_2$ so that $c_0$ hits $v_0$.  We can achieve this by taking a firing corresponding to the cut $(X,Y) = (\{u_0, u_1, u_2\}, \{q, v_0, v_1, v_2 \})$.  Given this cut, we can push $c_0, c_1$, and $c_2$ as described and extend this to a maximal legal firing towards $q$ by pushing the chips interior to the edges $(u_0,q), (u_1,q)$ and $(u_2,q)$ distance $a$ towards $q$.
       
       Firing 2:  Now, we would like to perform a maximal legal chip-firing move towards $q$ which will push $c_0$ and $c_1$ distance $a$ back towards $u_0$ and $u_1$ respectively so that $c_1$ reaches $u_1$.  As in Firing 1, we can achieve this by the taking the cut corresponding to the partition $$(X,Y) = (\{v_0,v_1\},\linebreak  \{v_2, u_0, u_1, u_2, q \} )$$ and pushing chips in each of the other edges of this cut length $a$ towards $\{v_2, u_0, u_1, u_2, q \}$.  
       
       By ignoring the position of all of the chips other than $c_0, c_1$, and $c_2$, we observe that we have returned to the orginal divisor with $b$ replaced by $b-a$, so we have subtracted $a$ from $b$.  We can now perform the Euclidean algorithm by subtracting $a$, $n$ times from $b$ so that $0 \leq b-na <a$.  By the symmetry of the construction, we may now reverse the roles of $c_0$ and $c_2$ and repeat.  The one subtlety here is that we need to be sure that none of the other chips in the metric graph eventually reach either of the endpoints of the edge they are contained in, otherwise we might not be able to perform the firings described above.  This is why we take the chips to be at the midpoints of sufficiently long edges.  If $a$ and $b$ are such that ${a\over b} \notin \mathbb{Q}$, this process will never terminate, but the series of length of the firings will converge, and we can take the lengths of the edges to be the twice this series of the lengths corresponding to the firings performed.  It remains to prove that the corresponding series of lengths converges.  To this end, we will assign some notation to the quantities appearing in the Euclidean algorithm.  Given two numbers $a_i$ and $b_i$ with $0< a_i < b_i$, we define $b_{i+1}=a_i$  and $a_{i+1} = b_i -n_ia_i$ with $n_i \in \mathbb{N}$ and $0 \leq b_i -n_i a_i < b_i$.  Letting $l_i = n_ia_i$, it needs to be shown that $\sum_{i \geq 0} l_i$ converges.  We claim that taking $a=a_0$ and $b=b_0$, $\sum_{i \geq 0} l_i \leq 4 b $.  This follows from the simple observations that $l_{i+1} \leq l_i$ and $l_{i+2} < {1\over 2} l_i$, which allow us to conclude that $ \sum_{i \geq 0} l_i$ is bounded geometrically and the claim follows.
       
\end{proof}

\section{Running Time Analysis via Ordinal Numbers}       
    
             We now prove than any reduction of a divisor which does not terminate has a well-defined limit.  This will allow us to interpret the greedy algorithm as a transfinite algorithm and to analyze its running time in the language of ordinal numbers.  We first prove that for any infinite reduction, the sum of the lengths of the firings must converge.   
       
 \begin{lemma}
 Let $\Gamma$ be a metric graph, $D$ a divisor on $\Gamma$ such that $D(p)\geq 0 $ for all $p \neq q$, and let $f_i$ be an infinite sequence of maximal legal firings reducing $D$.  Then the series $\sum_i f_i$ converges and the greedy reduction of $D$ has a well defined limit.
  \end{lemma}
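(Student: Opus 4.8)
The plan is to reduce the statement to a uniform bound on the partial sums, and then to obtain that bound by completing each finite truncation of the given infinite reduction to a finite reduction and invoking the confluence of the reduction process.

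First I would fix notation. Let $F_k := f_1 + \cdots + f_k$ be the cumulative firing function after $k$ steps, so that $D_k := D - Q(F_k)$ is the divisor reached after $k$ firings, and $D_k$ is nonnegative away from $q$ for every $k$. Each $f_i$ is a firing toward $q$: it takes the value $\epsilon(f_i)$ on the component of $\Gamma \setminus E_{f_i}$ containing $q$ and the value $0$ on the other component, so $0 \le f_i \le \epsilon(f_i)$ on all of $\Gamma$ and $f_i(q) = \epsilon(f_i)$. Hence $F_k$ is nonnegative, pointwise nondecreasing in $k$, attains its maximum at $q$ (since $F_k = \sum_i f_i \le \sum_i f_i(q) = F_k(q)$), and $F_k(q) = \sum_{i=1}^k \epsilon(f_i)$. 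Since the terms $\epsilon(f_i)$ are nonnegative, the series converges if and only if the numbers $F_k(q)$ are bounded, so it suffices to find a finite $R$ with $F_k(q) \le R$ for all $k$.

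Next I would use Theorem 1 to produce such an $R$. Applied to $D$ itself, Theorem 1 yields a finite reduction of $D$ with respect to $q$, with cumulative firing function $W \ge 0$ satisfying $D - Q(W) = \nu$, the unique $q$-reduced divisor equivalent to $D$; as above $W$ attains its maximum at $q$, and $W(q) =: R < \infty$. Applied to the intermediate divisor $D_k$ (which is nonnegative away from $q$, hence eligible), Theorem 1 yields a finite reduction of $D_k$ with cumulative firing function $\tilde G_k \ge 0$ and $D_k - Q(\tilde G_k) = \nu$ (the reduced divisor is unchanged along any reduction). Performing the firings $f_1, \dots, f_k$ and then this finite reduction of $D_k$ exhibits $F_k + \tilde G_k$ as the cumulative firing function of a finite reduction of $D$ all the way to $\nu$. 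I would then invoke the confluence (abelian) property of chip-firing — the metric analogue of the result of Bj\"orner, Lov\'asz and Shor \cite{Bjorner} that the firing data realizing a prescribed terminal configuration is independent of the order in which firings are performed — which says that every finite reduction of $D$ to $\nu$ has the same cumulative firing function, namely $W$. Thus $F_k + \tilde G_k = W$; since $\tilde G_k \ge 0$, this gives $F_k \le W$ pointwise and in particular $F_k(q) \le W(q) = R$. Hence $\sum_{i \ge 1} \epsilon(f_i) = \sup_k F_k(q) \le R < \infty$, which is the claim.

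The step requiring genuine justification is the confluence identity $F_k + \tilde G_k = W$ (equivalently, $F_k \le W$): a partial greedy reduction never fires any point of $\Gamma$ by more than a complete reduction does. For finite graphs this is the classical abelian property underlying \cite{Bjorner}; in the metric setting it should follow by the same exchange argument, checking that two legal firings toward $q$ that are both available can be carried out in either order after passing to a common refinement of the cuts involved, but this is where the work lies. I do not expect a purely analytic substitute — such as bounding the Dirichlet energy $\int_\Gamma (F_k')^2$ — to be easier: a firing toward $q$ can displace an individual chip along an edge in the direction away from $q$ whenever $q$ is reached more cheaply across a different cut edge, so the cross terms arising in such an estimate have no controlled sign, and pinning them down appears to demand essentially the same confluence statement.
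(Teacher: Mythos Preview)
Your approach is genuinely different from the paper's, and unfortunately it rests on a claim that is false in the metric setting. The identity $F_k + \tilde G_k = W$ you invoke --- equivalently, the assertion that every finite greedy reduction of $D$ to $\nu$ has the same total length $\sum_i \epsilon(f_i)$ --- fails. Take $\Gamma$ to be the triangle on $q,v,w$ with unit edge lengths and $D = 2\delta_v + \delta_w$. Dhar's reduction fires $\{v,w\}$ once with $\epsilon=1$, reaching the reduced divisor $\nu$ (one chip at $v$) with $W(q)=1$. But the greedy reduction that first fires $\{v\}$ with $\epsilon=1$ (one chip to $q$, one to $w$) and then fires $\{w\}$ with $\epsilon=1$ (one chip to $q$, one back to $v$) is also a sequence of maximal legal firings terminating at the same $\nu$, yet it has $F_2(q)=2>1=W(q)$. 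Hence $F_2\not\le W$, and your bound $\sum_{i\le k}\epsilon(f_i)\le R$ already fails at $k=2$; no exchange argument can repair it. What the Bj\"orner--Lov\'asz--Shor philosophy \emph{does} give is invariance of the function $g:=\sum_i(\epsilon(f_i)-f_i)$, which vanishes at $q$ (this is automatic: $Q(g)=\nu-D$ determines $g$ up to a constant and $g(q)=0$ pins it down). In your normalization that only yields $F_k+\tilde G_k = W + c_k$ for a constant $c_k$ depending on $k$, which places no bound on $F_k(q)$. The phenomenon behind the counterexample is that a legal firing of a set $B\not\ni q$ may push chips toward vertices of the $q$-side component other than $q$ itself, so chips can be shuffled back and forth among non-$q$ vertices, inflating $\sum_i\epsilon(f_i)$ without affecting the terminal divisor.

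The paper's proof avoids confluence entirely and is elementary. For each combinatorial vertex $v$ it sets $l(v)$ equal to the sum of the $\epsilon(f_i)$ over those $i$ for which $v$ lies on the boundary of the cut; it observes that $l(v)<\infty$ whenever $v$ is adjacent to $q$ (else $q$ would receive infinitely many chips), and then shows that $l(v)<\infty$ forces $l(v')<\infty$ for every neighbour $v'$ of $v$: otherwise $v'$ would push infinitely many chips into the edges incident to $v$, and since $l(v)<\infty$ those chips could never escape that neighbourhood, contradicting the finiteness of the total chip count. Connectedness of $\Gamma$ then gives $l(v)<\infty$ for every combinatorial vertex $v$, and since every firing contributes its length to at least one $l(v)$, the series $\sum_i\epsilon(f_i)$ is dominated by $\sum_v l(v)<\infty$.
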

  
  \begin{proof} 
  
   Let $l(p) = \sum_{i=0}^{\infty} f_i(p)$ for $p \in \Gamma$.  We now take $v$ and $v'$ to be combinatorial vertices.  If $l(v)$ is finite then $l(v')$ is finite -- if this were not the case, it would mean that $v'$ sent an infinite number of chips towards $v$ which were never able to leave the set of edges incident to $v$ and so we would have an infinite number of chips clustered around $v$, a clear contradiction.  Take some $v$ adjacent to $q$.  Clearly $l(v)$ is finite, otherwise $v$ will send an infinite number of chips to $q$.  By the connectedness of the metric graph, it follows that $l(v)$ is finite for each combinatorial vertex, hence $l(v)$ is finite for each $v$ and it follows that $\sum_i f_i$ converges. 
   
   We now show that $\sum_{i=0}^{\infty} \epsilon (f_i)$ converges.  Because $\Gamma$ is compact and the sum of $f_i$ is convergent, we see that the limit of $\sum_{i=0}^{\infty} \int_{\Gamma} f_i$ is well defined.  Moreover, $\int_{\Gamma} f_i \geq \epsilon( f_i)m(\Gamma)$, where $m(\Gamma)$ is the sum of the lengths of the edges of $\Gamma$, as $f$ is a nonnegative piecewise affine function with slopes $\pm1$, therefore $\sum_{i=0}^{\infty} \epsilon (f_i)$ converges.

Label the chips in $D$ arbitrarily.  For each passage from $D_i$ to $D_{i+1}$, a given chip $c$ either stays fixed or travels $\epsilon (f_i)$.  The series of these lengths which $c$ travels must converge because it is an increasing sequence bounded above by $\sum_{i=0}^{\infty} \epsilon (f_i)$.  Therefore as we follow the path which this chip traces out, we see that it must have a well defined limit.  Hence the limit of the greedy reduction has a well defined limit.
  \end{proof}

            It is now natural, given an infinite greedy reduction,  to pass to the limit and begin the process again.  We will analyze the running time of the greedy reduction algorithm in terms of ordinal numbers.  For an introduction to ordinal numbers, we refer the reader to \cite{Hrbacek}.  We remark that in what follows, we will not use any advanced properties of ordinal numbers, rather they serve as a bookkeeping tool for rigorously investigating the question of how long it takes for the greedy reduction of a divisor to terminate.   It has been proven \cite{Baker4}\cite{Mikhalkin} that any convergent sum of basic chip-firing moves is itself a finite sum of basic chip-firing operations, so we can be confident that in passing to the limit of a chip-firing process, we never leave the class of divisors linearly equivalent to the one we started with.  In what follows, we would like to emphasize that $\omega ^n$ is not $n \omega$, that is, even informally we should not think of $\omega ^n$ as $n$ copies of $\omega$ concatenated, rather we should consider this quantity as a nest of $\omega$'s of depth $n$.

 \begin{lemma}\label{lower}
        For every $n \in \mathbb{N}$, there exists a metric graph $\Gamma$ and a divisor $D$ on $\Gamma$ with $D(p)\geq 0$ for all $p \in \Gamma$ with $p \neq q$ such that the greedy reduction of $D$ with respect to $q$ takes time at least $ \omega^n$.
\end{lemma}

\begin{proof}
         This is again a proof by construction.  The idea is that by ``gluing together" $n$ copies of the previous Euclidean example we can obtain running time $\omega^n$.  See Figure 2 for an illustration of a piece of the example which will allow us to obtain running time $\omega^2$.  Missing from the figure are the edges $(u_i,q)$ and $(v_i,q)$ for all $i$.  Once again, we imagine that all of the edges are sufficiently long and that there are chips at each of the midpoints of the edges not drawn.  The idea is that we can run the example described previously for the bottom 3 rows (letting $(u_0,v_0,c_0)$ and $(u_1,v_1,c_1)$ switch roles).  In the limit, $c_0, c_1,$ and $c_2$ move to $u_0, v_1$ and $v_2$ respectively, after which we may use the top two rows to ``recharge" $c_1$ and $c_2$.  This recharging is achieved by the following two firings, which are illustrated in Figure 2:
         
         Firing 1:  We would like to push $c_0$ and $c_3$ distance $a$ towards $v_0$ and $v_3$ respectively so that $c_3$ hits $v_3$.  We can achieve this by taking a maximal legal firing corresponding to the cut $$(X,Y) = (\{u_0,u_3\},\{q,u_1,u_2,u_4,v_0,v_1,v_2,v_3,v_4\}).$$
         
         Firing 2:  We would like to push $c_0, c_1$ and $c_3$ distance $a$ towards $v_0,v_1$ and $v_3$ respectively so that $c_0$ hits $v_0$.  We can achieve this by taking a maximal legal firing corresponding to the cut $$(X,Y) = (\{v_0, v_1, v_3\}, \linebreak \{q, u_0, u_1, u_2, u_3, u_4, v_2, v_4\}).$$
         
         The figure shows how we can use $c_3$ and $c_0$ to recharge $c_1$.  We can then perform a similar pair of firings using $c_4$ and $c_0$ to recharge $c_2$.  We then iterate the process.  The one subtle point is that we again need convergence of the double series of lengths coming from the firings.  In order to do this, we should perform one step of the Euclidean algorithm with $c_0, c_3$ and $c_4$ after taking a limit of the bottom three rows and before recharging $c_1$ and $c_2$.  A simple calculation shows that this minor adjustment ensures convergence.  We leave the extension to $n$ copies of the Euclidean example as an exercise for the reader.  In order to ensure convergence of the associated $n$ nested series of lengths, we should order the firings on the product of the n copies of the Euclidean example lexicographically.

       \centerline{\includegraphics[scale=0.65]{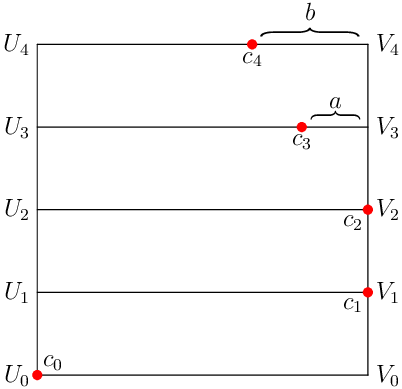}}

\centerline{       \includegraphics[scale=0.65]{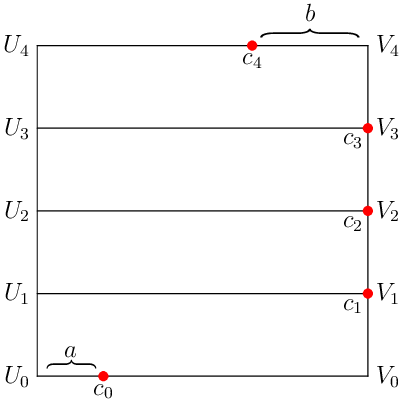}}
       
        \centerline{     \includegraphics[scale=0.65]{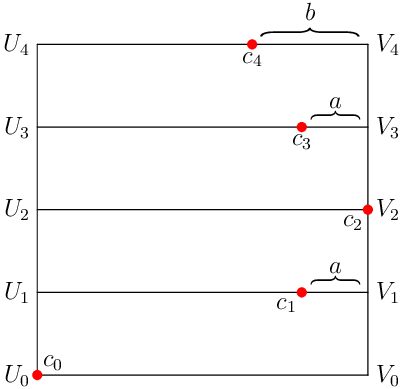}}
        
        \

\centerline{Figure 2.  Two copies of the Euclidean example glued}
\centerline { together and a recharging move achieved by two firings.}

\end{proof}

        We now show that in some sense, the previous example is worst possible.  The previous example shows that for any ordinal number $\alpha < \omega^{\omega}$, there exists a divisor $D$ with a greedy reduction which takes more than $\alpha$ steps.  While it is more or less obvious that we cannot have a greedy reduction with an uncountable number of iterations, $\omega^{\omega}$ is still a countable ordinal and so {\it a priori} there might exist a divisor with a greedy reduction which takes $\omega^{\omega}$ steps.  The following result shows that this cannot occur.       
                
\begin{lemma}\label{upper}
Let $\Gamma$ be a metric graph and $D$ be a divisor on $\Gamma$ such that $D(p)\geq 0$ for all $p \in \Gamma$ with $p \neq q$.  Any greedy reduction of $D$ with respect to $q$ takes at most $\omega^{deg(D)}$ steps.  
\end{lemma}

\begin{proof}        

	Insight into this claim can be derived from inspection of the Euclidean example.  As was noted previously, when we pass to the limit of this reduction process, $c_0$, $c_1$, and $c_2$ approach the combinatorial vertices $u_0$, $v_1$, and $v_2$ respectively.  We claim that more generally at step $\omega^{n-1}$ of any greedy reduction, there must be at least $n$ chips present at the combinatorial vertices.  Eventually, all of the chips will be at combinatorial vertices.  During the next firing, some chip will traverse an edge from one combinatorial vertex to another.  Thus the length of the firing will be at least the minimum the edge lengths.  This cannot happen an infinite number of times otherwise the sum of the lengths will diverge contradicting Lemma 1.  This will in turn give an upper bound on the running time of the greedy reduction algorithm of  $\omega^{deg(D)}$.
        
        We will proceed by induction.  Because we are performing maximal legal firings, we always have a chip at a combinatorial vertex, e.g., one of the chips which arrived at a combinatorial vertex after the previous firing.  We take this to be the base case of the claim.  For the inductive step, assume that there are at least $n$ chips at the combinatorial vertices at time $\omega^{n-1}$.  For each step $k\omega^{n-1}$, we can associate a set of chips $S_k$ which are present at the combinatorial vertices at this time.  Let $A$ be some set of chips which is equal to $S_k$ for infinitely many $k$.  At time $\omega^{n}$, the set of chips $A$ will lie at combinatorial vertices.  Moreover, if there exists some set of chips $B \neq A$ which is equal to $S_k$ for infinitely many $k$, then the union $A\cup B$ will be present at combinatorial vertices at time $\omega^{n+1}$, and we will have proved the claim, therefore we may assume that there exists a unique $A$ equal to $S_k$ for infinitely many $k$.  At time $k\omega^n +1$ some chip $c_k$ must reach a combinatorial vertex.  Observe that the chip $c_k \in S_k$ for only finitely many $k$, otherwise the nested series of lengths will diverge.  Therefore there exist some chip $c = c_k$ for infinitely many $k$ such that $c \notin S_k$.  We conclude that $A \cup {c}$ are living at combinatorial vertices in the limit at time $\omega^{n}$ thus completing the proof.
        
\end{proof}

 We note that although we are working with ordinal numbers, the previous argument employed only finite induction, not transfinite induction.  Recall that a function is $f$ is $\Theta(g)$ if there exist positive constants $c_1$ and $c_2$ such that asymptotically   $c_1 g \leq f \leq c_2 g$.  Now we are ready to state the main theorem of this section.
 
 \begin{theorem}
 The worst case running time of the greedy reduction algorithm for divisors on metric graphs is $\omega^{\Theta( { \rm deg}(D))}$.
 \end{theorem}
 
 \begin{proof}
 This follows directly from Lemma \ref{lower} and Lemma \ref{upper}.
 \end{proof}
 
   We conclude with a question posed to the author by Sergey Norine.  The previous bound is a function of the degree of the divisor, but not the metric graph $\Gamma$.  It would be nice to have an upperbound on the running time of an arbitrary greedy reduction on $\Gamma$ in terms of the structure of $\Gamma$.  Precisely, does there exist a bound of the form $\omega^{f(g)}$ where $f$ is a polynomial in $g$, the {\it cyclomatic genus} of $\Gamma$?  Can we take $f$ to be linear?
 
 \

 \centerline{Acknowledgements:}

 Many thanks to Ye Luo and Matthew Baker for posing the original question which prompted this paper and for productive discussions.  Additional thanks for Farbod Shokrieh and Sergey Norine for helpful comments and conversations.  Thanks to Maria Gillespie for help with the figures, and a special thanks to Pedro Rangel with whom discussions on ordinal numbers inspired the last section of this paper.  Additional thanks to the Bellairs Research Institute where some of this work was completed during the Tropical and Non-Archimedean Geometry Workshop.

\bibliographystyle{model1a-num-names}
\bibliography{<your-bib-database>}

\end{document}